\numberwithin{equation}{section}
\newtheorem{theorem}{Theorem}
\newtheorem{lemma}[theorem]{Lemma}
\newtheorem{sublemma}[theorem]{Sublemma}
\newtheorem{corollary}[theorem]{Corollary}
\newtheorem{proposition}[theorem]{Proposition}
\newtheorem{notation}[theorem]{Notation}
\theoremstyle{definition}
\newtheorem{definition}[theorem]{Definition}
\newtheorem{example}[theorem]{Example}
\newtheorem{remark}[theorem]{Remark}
\begin{document}
\title{
Taut submanifolds are algebraic}

\author[Chi]{Quo-Shin Chi}
\thanks{The author was partially supported by NSF Grant
No. DMS-0103838} 
\address{Department of Mathematics \\ Campus Box 1146 \\ Washington
University \\ St. Louis, Missouri 63130} 
\email{chi@math.wustl.edu}

\date{}

\keywords{Dupin hypersurface, taut submanifold, semialgebraic set}
\subjclass[2000]{Primary 53C40}

\begin{abstract} We prove that every (compact) taut submanifold
in Euclidean space is real algebraic, i.e., 
is a
connected component of a real irreducible algebraic variety
in the same ambient space. This answers affirmatively a question of Nicolaas Kuiper raised in the 1980s.
\end{abstract}

\maketitle
\pagestyle{myheadings}
\markboth{QUO-SHIN CHI}{TAUT SUBMANIFOLDS ARE ALGEBRAIC}

\section{Introduction} 
An embedding $f$ of a compact, connected manifold $M$ into Euclidean space
${\mathbb R}^n$ is
\textit{taut} if every nondegenerate (Morse) Euclidean distance function, 
\[
L_p:M \to {\mathbb R}, \quad L_p(z) = d(f(z),p)^2, \quad p \in {\mathbb R}^n,
\]
has $\beta (M,{\mathbb Z}_2)$ critical points on $M$, where
$\beta (M,{\mathbb Z}_2)$ is the sum of the ${\mathbb Z}_2$-Betti numbers
of $M$.  That is, $L_p$ is a perfect Morse function on $M$.

A slight variation of Kuiper's observation in~\cite{Ku1} 
gives that tautness can be rephrased by the property that
\begin{equation}\label{d}
H_j(M\cap B,{\mathbb Z}_2)\rightarrow H_j(M,{\mathbb Z}_2)
\end{equation}
is injective for all closed disks $B\subset{\mathbb R}^n$ and all
$0\leq j\leq\dim(M)$. As a result, tautness is a conformal invariant,
so that via stereographic projection we can reformulate the notion of tautness in the sphere $S^n$ using
the spherical distance functions. Another immediate consequence
is that if $B_1\subset B_2$, then
\begin{equation}\label{inj}
H_j(M\cap B_1)\rightarrow H_j(M\cap B_2)
\end{equation}
is injective for all $j$.

Kuiper in~\cite{Ku} raised the question whether all taut submanifolds in ${\mathbb R}^n$ are real algebraic.
We established in~\cite{CCJ} that a taut submanifold
in ${\mathbb R}^n$ is real algebraic in the sense that, it is a
connected component of a real irreducible algebraic variety
in the same ambient space, provided the submanifold is of dimension no greater than 4.

In this paper, we prove that all taut submanifolds in ${\mathbb R}^n$ are real
algebraic in the above sense, so that each is a connected component of a real irreducible algebraic variety in the same ambient space. 
In particular, any taut hypersurface in ${\mathbb R}^n$ is described as
$p(t)=0$ by a single irreducible polynomial $p(t)$ over ${\mathbb R}^n$.
Moreover, since a tube with a small radius of a
taut submanifold in ${\mathbb R}^n$ is a taut hypersurface~\cite{Pi}, which recovers
the taut submanifold along its normals, 
understanding a taut submanifold, in principle, comes down to understanding
the hypersurface case defined by a single algebraic equation.

It is more convenient to prove that a taut
submanifold in the sphere is real algebraic, though occasionally we will switch
back to Euclidean space when it is more convenient for the argument.
Since a spherical distance function $d_p (q) = \cos^{-1} (p \cdot q)$ has the same critical points as
the Euclidean height function
$\ell_p (q) = p \cdot q$, for $p,q \in S^n$, a compact submanifold $M \subset S^n$ is taut if and only if it is
{\em tight}, i.e., every nondegenerate height function $\ell_p$ has the
total Betti number $\beta (M, {\mathbb Z}_2)$ of critical points
on $M$.
We will use both $d_p$ and $\ell_p$ interchangeably, whichever is
more convenient for our argument.

Our proof is based on the {\em local finiteness property}~\cite[Definition 7]{CCJ}
that played the decisive role for a taut submanifold to be algebraic when its dimension is $\leq 4$. This property
is parallel in spirit to the Riemann extension theorem in complex variables.
Namely, let ${\mathcal G}$ (i.e., the first letter of the word "good")
be the subset of a taut hypersurface $M$ where the 
principal multiplicities are locally constant, and let ${\mathcal G}^c$ be its complement in $M$. Let $S\subset{\mathcal G}^c$ be closed in $M$; $S$
is typically a set difficult to manage. 
If $M\setminus S$ is well-behaved to have finitely many connected components,
and moreover, each $x\in {\mathcal G}^c\setminus S$
has a small neighborhood $U$ in $M$ for which ${\mathcal G}\cap U$ is 
also well-behaved to have finitely many connected components, then $M$ is algebraic.

As is pointed out and manifested in~\cite{TT}, focal sets play an important role in the study of submanifolds. In general, however, they are nonsmooth, where, for instance, the best one can expect of that of a 
hypersurface in $S^n$ is that
it is at least of Hausdorff codimension 2~\cite{CCJ}. One thus expects that the (unit) tangent cones of a focal set can be a useful tool for understanding such nonsmooth 
objects, though in general the tangent cones of a focal set themselves are also rather untamed. 
In the case of taut submanifolds, nonetheless, we can tame a focal set when we use the mathematical induction 
on the dimension of the ambient sphere $S^n$ for which all taut submanifolds are algebraic. Since a curvature surface $Z$, 
which is taut by Ozawa theorem~\cite{Oz}, lies in a curvature sphere, the induction 
hypothesis implies that $Z$ is algebraic, which leads us to the free access of parametrizing the focal set of $Z$ and
its unit tangent cone at a point by semialgebraic sets. (For the reader's convenience, we include a section on semialgebraic sets and some of their important properties.) 
From this we can show, by conducting certain dimension estimates, facilitated and made precise by the introduction of unit tangent cones, that barring 
a closed set $S\subset{\mathcal G}^c$ 
of Hausdorff $(\dim(M)-1)$-measure zero that, hence by~\cite{SY}, does not disconnect the taut hyperusrface $M\subset S^n$, the set 
${\mathcal G}^c$ is essentially a manifold of codimension 1 in $M$, as expected. The local finiteness property then results in the hypersurface case,
whence follows the algebraicity of a taut submanifold.



\section{preliminaries}

\subsection{The Ozawa theorem} A fundamental result on taut submanifolds is due to Ozawa~\cite{Oz} (see also~\cite{TT1} for its generalization to the Riemannian case).

\begin{theorem}[Ozawa]
\label{Ozawa}
Let $M$ be a taut submanifold in $S^n$, and let $\ell_p, p \in S^n$, be a linear height
function on $M$.  Let $x \in M$ be a critical point of $\ell_p$, and let $Z$ be the connected component of the
critical set of $\ell_p$ that contains $x$. Then $Z$ is \\
{\rm (a)} a smooth compact manifold of dimension equal to the nullity of the Hessian of $\ell_p$ at $x$;\\
{\rm (b)} nondegenerate as a critical manifold;\\
{\rm (c)} taut in $S^n$.
\end{theorem}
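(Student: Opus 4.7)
The strategy is to use tautness as a Morse-theoretic tool via the injectivity property \eqref{d}, together with a perturbation argument, in order to establish (a), (b), and (c) in a coordinated fashion rather than sequentially. Let $k$ denote the nullity of the Hessian of $\ell_p$ at $x$, and let $S$ be the connected component of the critical set of $\ell_p$ containing $x$. I would perturb $p$ to $p'=p+\epsilon q$ for small $\epsilon$ and generic $q$, so that $\ell_{p'}$ is Morse on $M$.

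For parts (a) and (b), the plan is to examine the critical points of $\ell_{p'}$ in a small tubular neighborhood $U$ of $S$ in $M$. Sublevel sets of the form $\{\ell_{p'}\le c\}\cap U$ can be exhibited as $M\cap B$ for suitable Euclidean closed disks $B$ after the stereographic transfer between sphere and Euclidean pictures. The injectivity in \eqref{d} then forces the local Morse data of $\ell_{p'}$ near $S$ to assemble into a \emph{perfect} local Morse function on $U$: the number of critical points of each index that accumulate onto $S$ as $\epsilon\to0$ equals the corresponding Betti number of the local Morse-theoretic contribution of $S$. Comparing this count with the direct Morse-theoretic calculation one obtains from the normal bundle of $S$ shows that $S$ is locally smooth of dimension exactly $k$, and that the Hessian of $\ell_p$ is nondegenerate on directions normal to $S$. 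This yields (a) and (b) simultaneously.

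For part (c), I would take a nondegenerate height function $\ell_q$ on $S$ and study the two-parameter family $\ell_{p+\epsilon q}$ on $M$ via Morse--Bott reduction. For $\epsilon$ small and $q$ generic, the critical points of $\ell_{p+\epsilon q}$ lying in $U$ correspond bijectively to the critical points of $\ell_q|_S$, with Morse index shifted by the constant normal index of $\ell_p$ along $S$ given by (b). Since $M$ is taut, $\ell_{p+\epsilon q}$ has exactly $\beta(M,\Z_2)$ critical points on $M$; applying the inclusion injectivity \eqref{inj} to the sublevel sets of $\ell_{p+\epsilon q}$ intersected with $U$, and separating the contributions of $U$ from those of $M\setminus U$, one extracts that the critical points in $U$ account for precisely $\beta(S,\Z_2)$ of them. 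Passing to the limit $\epsilon\to 0$, one concludes that $\ell_q|_S$ has exactly $\beta(S,\Z_2)$ critical points, so $\ell_q|_S$ is perfect and $S$ is tight, hence taut, in $S^n$.

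The main obstacle is the Morse--Bott reduction in the last step: one has to match the critical points of the perturbed function on $M$ with those of $\ell_q|_S$ bijectively, with correct indices, and then promote a local Morse count on the tubular neighborhood $U$ to a global Betti-number identity for $S$. This requires the quantitative normal nondegeneracy from (b) in order to lift critical points of $\ell_q|_S$ to $M$ uniquely, and a careful separation of the sublevel-set homology of $M$ into pieces coming from $U$ and from $M\setminus U$, which is where tautness of $M$ enters decisively through the injectivity of \eqref{d} and \eqref{inj}. Parts (a)--(c) are thus naturally interlocked, and any proof proposal that hopes to succeed must treat them together within a single perturbation and Morse--Bott framework.
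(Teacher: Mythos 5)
The paper does not prove this statement: it is Ozawa's theorem, quoted verbatim and attributed to \cite{Oz}, and the paper simply uses it as a black box. So there is no in-paper proof against which to compare your proposal; it must be judged on its own merits, and on those it has a genuine circularity. To establish parts (a) and (b) you work with ``a small tubular neighborhood $U$ of $S$ in $M$,'' compare a critical-point count ``with the direct Morse-theoretic calculation one obtains from the normal bundle of $S$,'' and conclude that ``the Hessian of $\ell_p$ is nondegenerate on directions normal to $S$.'' Each of these phrases presupposes that $S$ is already a smooth submanifold with a well-defined normal bundle, which is precisely the content of (a). Before (a) is known, $S$ is merely the connected component of a closed critical set and may a priori be a badly singular compactum; one cannot invoke tubular neighborhoods, normal bundles, or normal directions to it. Some substitute for these notions (for example, a local analysis around the degenerate critical point via the splitting-lemma/Gromoll--Meyer normal form, or Ozawa's own approach through top sets and totally focal considerations) is needed to get (a) and (b) off the ground.

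A second concrete problem is the claim that ``sublevel sets of the form $\{\ell_{p'}\le c\}\cap U$ can be exhibited as $M\cap B$ for suitable Euclidean closed disks $B$.'' A linear height function gives $\{\ell_{p'}\le c\}\cap M=M\cap B$ for a closed ball $B$, but once you intersect further with the local neighborhood $U$ you lose that form: $M\cap B\cap U$ is generally not $M\cap B'$ for any ball $B'$. Since the injectivity \eqref{d} and \eqref{inj} is stated only for full intersections $M\cap B$, you cannot directly apply it to these localized sublevel sets, and the ``local perfect Morse function'' conclusion does not follow as written. You do correctly flag at the end that the Morse--Bott reduction in (c) depends on the normal nondegeneracy of (b) -- but that dependence is exactly what makes the circularity in your (a)--(b) argument fatal rather than cosmetic. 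As it stands, the proposal outlines a plausible heuristic but not a proof; the hard step is establishing smoothness and nondegeneracy of $S$ from tautness without assuming them.
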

In particular, $\ell_p$ is perfect Morse-Bott~\cite{Bt}. We call such a connected component of
a critical set of $\ell_p$ a {\em critical submanifold} of $\ell_p$. 

An important consequence of Ozawa's theorem is the following~\cite{Ce}.

\begin{corollary}\label{cor}
Let $M$ be a taut submanifold in $S^n$.
Then given any principal space
$T$ of any shape operator $S_{\zeta}$ at any point $x\in M$, there exists a
submanifold $Z$ (called a {curvature surface}) through $x$ whose tangent space at $x$ is $T$. That is,
$M$ is Dupin~\cite{Ma},~\cite{Pi}.
\end{corollary}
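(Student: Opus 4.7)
My plan is to realize the curvature surface $S$ as a connected component of the critical set of a carefully chosen linear height function, and to read off its tangent space from the nondegenerate Morse--Bott structure guaranteed by Ozawa's theorem.

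First I would choose the focal point. Let $\zeta$ be a unit normal to $M$ in $S^n$ at $x$, and let $\lambda$ be the principal curvature of $S_\zeta$ whose eigenspace is $T$. Pick $r\in(0,\pi)$ with $\cot r=\lambda$ and set
\[
p:=(\cos r)\,x+(\sin r)\,\zeta\in S^n.
\]
Since $p$ lies in the ambient orthogonal complement $\R x\oplus N_xM$ of $T_xM$ inside $\R^{n+1}$, the point $x$ is automatically a critical point of $\ell_p(q)=p\cdot q$ restricted to $M$.

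The key calculation is the Hessian of $\ell_p|_M$ at $x$, which I would compute using the Gauss formula in the chain $\R^{n+1}\supset S^n\supset M$. Extending a tangent vector $v\in T_xM$ to a local vector field $V$ tangent to $M$, one splits $\nabla^{\R^{n+1}}_VV$ into the $M$-tangential part, the $S^n$-normal second fundamental form of $M$, and the $-|V|^2x$ term coming from $S^n\subset\R^{n+1}$. The tangential piece is killed by $p$ at the critical point, and a straightforward accounting yields
\[
\text{Hess}(\ell_p|_M)(v,w)=(\sin r)\,\bigl\langle (S_\zeta-\lambda I)v,\,w\bigr\rangle
\]
for $v,w\in T_xM$. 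Hence the nullspace of the Hessian at $x$ is exactly the $\lambda$-eigenspace $T$.

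With this in place, Ozawa's theorem finishes the argument. By Theorem \ref{Ozawa}(a), the connected component $S$ of the critical set of $\ell_p$ through $x$ is a smooth compact submanifold of $M$ whose dimension equals the Hessian nullity at $x$, namely $\dim T$. Part (b) of the theorem asserts that $S$ is nondegenerate as a critical manifold, which combined with the automatic inclusion $T_xS\subset\ker(\text{Hess}(\ell_p)|_{T_xM})$ forces the two subspaces to coincide. Thus $T_xS=T$, so $S$ is the desired curvature surface; since $x$, $\zeta$, and $T$ were arbitrary, $M$ is Dupin. The only genuinely nontrivial step in the plan is the identification of the Hessian with $(\sin r)(S_\zeta-\lambda I)$; once that calibration between focal distance and principal curvature is pinned down, the appeal to Ozawa's theorem is essentially automatic.
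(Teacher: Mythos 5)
Your proposal is correct and follows essentially the same approach as the paper: pick the focal point corresponding to the principal curvature $\lambda$ of $T$ (the paper writes it in Euclidean form $p=x+\zeta/\mu$ in the remark after the corollary; you write the equivalent spherical form $p=(\cos r)x+(\sin r)\zeta$ with $\cot r=\lambda$), observe that $x$ is automatically critical for $\ell_p$, and apply Ozawa's theorem to the critical submanifold through $x$. Your explicit Hessian computation $\mathrm{Hess}(\ell_p|_M)=(\sin r)\,\langle (S_\zeta-\lambda I)\,\cdot\,,\,\cdot\,\rangle$ is correct and is the calibration the paper leaves implicit; note that part (a) of Ozawa's theorem alone already forces $T_xS=T$ by dimension count, so the appeal to part (b) is harmless but not strictly needed.
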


Let us remark on a few important points in the corollary. It is convenient
to work in the ambient Euclidean space ${\mathbb R}^n$. Let $\mu$ be the
principal value associated with $T$. Consider the focal point $p=x+\zeta/\mu$.
Then the critical submanifold $Z$ of the (Euclidean) distance function $L_p$
through $x$ is exactly the desired curvature surface through $x$. The unit
vector field
\begin{equation}\label{focal}
\zeta(y):=\mu(p-y)
\end{equation}
for $y\in Z$ extends $\zeta$ at $x$
and is normal to and parallel along $Z$. 
The $(n-1)$-sphere of radius $1/\mu$
centered at $p$ is called the {\em curvature sphere} of $Z$. In particular, two different focal points
cannot have the same critical submanifold.


\subsection{A brief review on semialgebraic sets}

A \textit{semialgebraic subset} of ${\mathbb R}^{n}$ is one which is a finite
union 
of sets of the form
$$
\cap_{j} \{x\in {\mathbb R}^{n}: F_j(x) * 0\},
$$
where * is either $<$ or $=$, $F_j\in {\mathbb R}[X_1,\dots,X_n]$, the polynomial ring in $X_1,\cdots,X_n$, and the
intersection is finite. An {\em algebraic subset} is one when $*$ is $=$
for all $j$ without taking the finite union operation. Clearly, an algebraic set is semialgebraic.

It follows from the definition that any finite union or intersection of semialgebraic sets is semialgebraic,
the complement of a semialgebraic
set is 
semialgebraic, and hence a semialgebraic set taking away another
semialgebraic set leaves a semialgebraic set.
Moreover, the projection $\pi:{\mathbb R}^n\rightarrow {\mathbb R}^k$ sending
$x\in{\mathbb R}^{n}$ to its first $k$ coordinates maps a
semialgebraic set to a 
semialgebraic set.
In particular, the topological closure and interior of a semialgebraic set are
semialgebraic. 

\begin{example}\label{RNK} Let $L$ be a $k$-by-$l$ matrix each of whose entries is a polynomial over ${\mathbb R}^s$.
Then the subset of ${\mathbb R}^s$ where $L$ is of rank $t$ is semialgebraic.
\end{example}

\begin{proof} Let the $j$-by-$j$ minors of $L$ be $F_{j,1},\cdots,F_{j,i_j},1\leq j\leq\min(k,l)$. The set $R_j\subset {\mathbb R}^s$ where $L$ is of rank 
$\leq j-1$ is given by 
setting $F_{j,1}=\cdots=F_{j,i_j}=0$, which is an algebraic set. The subset of ${\mathbb R}^s$ for which $L$ is of rank $t$ is the semialgebraic set
$R_{t+1}\setminus R_t$.
\end{proof}

\begin{example}\label{ex} Let $A\subset {\mathbb R}^m$ be a
semialgebraic set. Consider the set
$$
B:=\{(x,y)\in (A\times A):x\neq y\}.
$$
$B$ is semialgebraic since it is the complement of the diagonal
of $A$ in $A\times A$. Consider the set
$$
C:=\{(x,y,z)\in B\times S^{m-1}:(x,y)\in B, z=(x-y)/|x-y|\}.
$$
$C$ is semialgebraic since it is defined by the polynomial equations
$$
(z_i)^2|x-y|^2=(x_i-y_i)^2
$$
where $x_i,y_i,z_i$ are the coordinates of $x,y,z$. Let $D$ be the
topological closure of $C$
in ${\mathbb R}^m\times {\mathbb R}^m\times S^{m-1}$, and let
$$
proj:{\mathbb R}^m\times {\mathbb R}^m\times S^{m-1}\rightarrow {\mathbb R}^m \times {\mathbb R}^m
$$
be the standard projection. The preimage
$$
E:=proj^{-1}((p,p)),\quad p\in A
$$
is also semialgebraic.

In fact, $E$ is obtained by taking the limits of converging subsequences
of $(q_n-p)/|q_n-p|$ for all converging sequences $(q_n)$ to $p$.
\end{example}

\begin{definition}\label{cone} We call the set $E$ in Example~\ref{ex} the {\em unit tangent cone} of the set
$A$ at $p$, remarking that the process of defining the unit tangent cone can be done
for any set. We say a sequence $(q_n)$ in $A$ {\em converges to a unit tangent cone
vector $e$ at $p$} if $q_n$ converges to $p$ and $(q_n-p)/|q_n-p|$ converges to $e$ .
\end{definition}

\begin{definition}\label{conenhd} For $e$ in Definition~\ref{cone} and any $\delta>0$, we let ${\mathcal O}_e$ be the semialgebraic open set 
$$
{\mathcal O}_e(\delta):=\{q\in{\mathbb R}^m:|e-(q-p)/|q-p||<\delta.
$$  
\end{definition}
${\mathcal O}_e(\delta)$ is a generalized cone with vertex $p$ and axis $e$.

 \vspace{2mm}

A map $f:S\subset {\mathbb R}^{n}\rightarrow{\mathbb R}^{k}$ over a semialgebraic $S$
is \textit{semialgebraic} if its graph in ${\mathbb R}^{n}\times{\mathbb R}^{k}$
is a semialgebraic set. 
It follows that the image of a semialgebraic map
$f:S\subset {\mathbb R}^{n}\rightarrow{\mathbb R}^{k}$
is semialgebraic, via the composition
${\rm graph}(f)\subset{\mathbb R}^{n}\times{\mathbb R}^{k}\rightarrow{\mathbb R}^{k},$
where the last map is the projection onto the second summand.

\begin{example}\label{rnk} The conclusion in Example~\ref{RNK} continues to hold if the entries of the 
matrix involved consist of semialgebraic functions.
\end{example}

A \textit{Nash function} is a $C^{\infty}$ semialgebraic map from an open
semialgebraic subset of ${\mathbb R}^{n}$ to ${\mathbb R}$.
A real analytic function $f$ defined
on an open 
semialgebraic subset $U$ of ${\mathbb R}^n$ is \textit{analytic algebraic} if it
is a solution of a polynomial equation on $U$ of the form,
\begin{equation}\label{Eq2.9}
a_{0}(x) f^s(x) + a_{1}(x) f^{s-1}(x) + \cdots + a_{s}(x) = 0,
\end{equation}
where $a_0(x)\neq 0,a_{1}(x), \cdots, a_s(x)$ are polynomials over
${\mathbb R}^{n}$.
These two concepts are in fact equivalent\cite[p.\ 165]{BCR}, that  
a function is Nash if and
only if it is analytic algebraic.

The following example is instructive.
\begin{example}\label{Ex2.1} For any number $\epsilon$ satisfying
$0 < \epsilon < 1$, the open disk
\begin{equation}\label{Eq2.10}
B^n(\epsilon) = \{s = (s^1,\dots,s^n)\in {\mathbb R}^n : |s| < \epsilon \}
\end{equation}
is an open semi-algebraic subset of ${\mathbb R}^n$.  The function
\begin{equation}\label{Eq2.11}
s^0 = \sqrt{1-|s|^2}
\end{equation}
on $B^n(\epsilon)$ is analytic algebraic, since $(s^0(x))^2 + a_0(x) =
0$ on $B^n(\epsilon)$, where $a_0(x)$ is the polynomial $|s|^2-1$ on
${\mathbb R}^n$. Partial derivatives of all orders of $s^0$ are analytic
algebraic.  In fact, an elementary calculation and induction argument
shows that if $D_i$ denotes the partial derivative with respect to
$s^i$, then
\[
D_{i_1\dots i_k} s^0 = \frac{a_k(s)}{(s^0)^m}
\]
where $a_k(s)$ is a polynomial on ${\mathbb R}^n$ and $m$ is a positive
integer.  Therefore, 
\[
(s^0)^{2m} (D_{i_1\dots i_k} s^0)^2-a_k(s)^2 = 0
\]
is an equation of the form~(\ref{Eq2.9}), since $(s^0)^2$ is a
polynomial on ${\mathbb R}^n$.
\end{example}
A slight generalization of the single-variable case in~\cite[p.\ 54]{BCR}, shows that the partial derivatives of any Nash
function are again Nash functions.  

Let $S$ be a semialgebraic subset of ${\mathbb R}^{n}$. The
\textit{dimension} of $S$, denoted $\dim (S)$, is the dimension of the ring
$R = {\mathbb R}[x^1,\cdots,x^n]/\mathcal I(S)$, where ${\mathcal I}(S)$ the ideal of all polynomials vanishing
on $S$, which is
the maximal length of chains of prime ideals of $R$.
As usual, it is proved that
if $S$ is a semialgebraic subset of ${\mathbb R}^n$ that is a $C^\infty$
submanifold of ${\mathbb R}^n$ of dimension $d$, then $\dim (S) = d$.

A semialgebraic subset $M$ of ${\mathbb R}^{m}$ is a \textit{Nash submanifold} of
${\mathbb R}^{m}$ of dimension 
$n$ if for every point $p$ of $M$, there exists a Nash diffeomorphism
$\psi$ from an open semialgebraic  
neighborhood $U$ of the origin in ${\mathbb R}^{m}$ into an open
semialgebraic neighborhood $V$ 
of $p$ in ${\mathbb R}^{m}$ such that $\psi(0)=p$ and
$\psi(({\mathbb R}^{n}\times\{0\})\cap U)= M\cap V$. Here, by a \textit{Nash  
diffeomorphism} $\psi$ we mean the coordinate
functions of $\psi$ and $\psi^{-1}$ are Nash functions.

Let $M$ be a Nash submanifold of
${\mathbb R}^{m}$. A mapping $f:M\rightarrow{\mathbb R}$ is a  
\textit{Nash mapping} if it is semialgebraic, and for every $\psi$ in the
preceding definition, $f\circ\psi|_{{\mathbb R}^{n}\cap U}$ 
is a Nash function.

As in the $C^{\infty}$ case, the semialgebraic version of the
inverse and implicit function theorems also hold~\cite[p.\ 56]{BCR}.
Moreover, the semialgebraic version of the (Nash) tubular neighborhood
theorem over Nash manifolds is true~\cite[p.\ 199]{BCR}.

Of special importance to us is the slicing theorem~\cite[p.\ 30]{BCR},
for which we only give the special version we need for the sake of clarity.

\begin{theorem}\label{slice} (Slicing Theorem) Let
$$
p_j(z,\lambda):=\lambda^{s_j}+a_{s_j-1}^j(z)\lambda^{s_j-1}+\cdots+a_1^j(z)\lambda+a_0^j(z),\quad 1\leq j\leq a,
$$
be real polynomials in $m+1$ variables $(z,\lambda)\in{\mathbb R}^m\times{\mathbb R}$ with
degree $s_j$ in $\lambda$. Then there
is a partition of\; ${\mathbb R}^m$ into a finite number of (disjoint)
semialgebraic sets
$A_1,\cdots,A_l$, and for each $i=1,\cdots,l$, a finite number (possibly zero)
of semialgebraic functions
$$
\zeta_{i,1}<\cdots<\zeta_{i,s_i}:A_i\rightarrow{\mathbb R},
$$
such that for every $z\in A_i$, $\lambda=\zeta_{i,1},\cdots,\zeta_{i,s_i}$
are the distinct roots of $p_j(z,\lambda)=0,1\leq j\leq a$.
\end{theorem}
The number of these semialgebraic functions may be zero because a real polynomial
may have no real roots.
More importantly, the slicing theorem encodes the multiplicities
of the roots into account. To see this for our later application, we start with a single polynomial
$$
p(z,\lambda):=\lambda^{s}+a_{s-1}(z)\lambda^{s-1}+\cdots+a_1(z)\lambda+a_0(z).
$$
The slicing theorem provides us with root functions $\zeta_{i,1},\cdots,\zeta_{i,s_i}$ over $A_i,1\leq i\leq l$. The polynomial
$p(z,\lambda)$ in the variable $\lambda$ has repeated roots if and only if
$s_i<s$,
in which case the largest $j+1$ for which 
$$
\partial^j p/\partial \lambda^j=0
$$
evaluated at $\zeta_{i,1}(z),\cdots,\zeta_{i,s_i}(z)$ 
is the multiplicity of the respective root.

Therefore, to find the root functions of $p(z,\lambda)$ with multiplicities, we solve, for each $k$,
$$
 \partial^j p/\partial \lambda^j=0,\quad 0\leq j\leq k,
$$
with the solution sets $A_{k,1},\cdots,A_{k,l_k}$ and root functions
$$
\xi_{k,i,1}<\cdots<\xi_{k,i,s_{k,i}}:A_{k,i}\rightarrow{\mathbb R},\quad 1\leq i\leq l_k.
$$
Employ the two operations of taking the complement and finite intersection of sets to perform on $A_{k,1},\cdots,A_{k,l_k}$ for $k=0,\cdots,s$, 
knowing $A\cup B=(A^c\cap B^c)^c$ and $A\setminus B=A\cap B^c.$ For instance, 
$$
\cup_iA_{0,i}\setminus\cup_i A_{1,i}
$$
is the semialgebraic subset where $p(z,\lambda)$ has only simple roots with the root functions $\xi_{0,i,j}$ that carry multiplicity 1.
Similarly,
$$
((\cup_i A_{1,i})\cap(\cup_i A_{2,i}))\setminus\cup_i A_{3,i}
$$
is the semialgebraic subset where $p(z,\lambda)$ has only double and triple roots, with the root functions 
$\xi_{k,i,j},k=1,2,$ that carry multiplicity $k+1$, etc.
Eventually, we end up with semialgebraic sets $V_1,\cdots,V_\tau$ that give rise to the root functions, with
multiplicities.
The set $T$ where the closures of $V_1,\cdots,V_\tau$ intersect, which
is also semialgebraic, is where the multiplicities of the roots of $p(z,\lambda)$
are not locally constant. It follows that $\dim(T)<k$, the ambient Euclidean dimension, because of the following dimension property~\cite[p.\ 53]{BCR}.

\begin{proposition}\label{<} Let $X\subset {\mathbb R}^s$ be a semialgebraic set and let $\overline{X}$ be its topological closure. 
Then $\dim({\overline X}\setminus X)<\dim(X)$.
\end{proposition}

A weaker smooth version~\cite{Re} than $\dim(T)<k$ above says that the set where
the principal multiplicities of the shape operator $S_\xi$ is not locally
constant,
as $\xi$ varies on the unit normal bundle of a smooth submanifold, is
nowhere dense.

Lastly, we record the important decomposition property~\cite[p.\ 57]{BCR}.

\begin{proposition}\label{tri} A semialgebraic set is the disjoint union of a finite number of Nash submanifolds
$N_j$, each Nash diffeomorphic to an open cube $(0,1)^{\dim(N_j)}$.
\end{proposition}
We call these Nash submanifolds in the decomposition {\em open cells} henceforth.

\subsection{The local finiteness property}

Recall the local finiteness
property in~\cite{CCJ} that holds the key for proving that a taut hypersurface $M$ is
real algebraic when its dimension is $\leq 4$. 
We denote by ${\mathcal G}$ the subset of $M$ where
the multiplicities of principal curvatures are locally constant, and by
${\mathcal G}^c$ the complement of ${\mathcal G}$ in $M$.

\begin{definition}\label{finite} A connected Dupin hypersurface $M$ of
$S^n$ has
the \textit{local finiteness property} if there is a set $S\subset {\mathcal G}^c$,
closed in $M$, such that $S$ disconnects $M$ in finitely many connected
components, and 
for each point $x\in {\mathcal G}^c\setminus S$
there is an open neighborhood $U$ of $x$ in $M$
such that ${\mathcal G}\cap U$ contains finitely many
connected open sets (whose union is dense in $U$).
\end{definition}

The technical advantage of excising a set $S$ in the definition is that, we can remove 
certain types of points whose principal multiplicities are not locally constant and hard 
to handle without affecting establishing the algebraicity of the taut submanifold, as will become evident later.

\section{Some local analysis and its implication} We first handle the case when $M$ is a hypersurface.
Fix a unit normal
field $\bf n$ over $M$ once and for all.
We label the principal curvatures
of $M$ by 
$\lambda_1\leq\cdots\leq\lambda_{n-1}$, which are Lipschitz-continuous functions on $M$
because the principal curvature functions on the linear space ${\mathcal L}$ of all symmetric matrices
are Lipschitz-continuous by general matrix theory~\cite[p.\ 64]{Ba},
and the Hessian of of $M$ is a smooth function from $M$ into ${\mathcal L}$.
Let $\lambda_j=\cot(t_j)$ for $0<t_j<\pi$. We have the Lipschitz-continuous focal maps
\begin{equation}\label{fmap}
f_j(x)=\cos(t_j)x+\sin(t_j){\bf n}.
\end{equation}
In fact, the $l$-th focal point $f_l(x)$, counting multiplicities, along $\bf n$ emanating from $x$ is
antipodally symmetric to the $(n-l)$-th focal point, counting multiplicities, along $-{\bf n}$ emanating from
$x$. The spherical distance functions $d_{f_l(x)}$ tracing backward
following $-{\bf n}$ thus
assumes the same critical point $x$ as the distance function $d_{-f_l(x)}$
tracing backward following $\bf n$; thus we may just consider the former case
without loss of generality. Accordingly, henceforth we refer to a focal point $p$
as being $f_j(x)$ for some $x$ and $j$ with an obvious modification if necessary.

\begin{notation} Let $p$ be the focal point of a fixed curvature surface $Z$
and let $q\neq p$ be the focal point of a nearby curvature surface. Set
$$
g:=\ell_q-\ell_p
$$
considered as the perturbation of the height function $\ell_p$ to the
nearby $\ell_q$ by $g$.
\end{notation}

We assume
$$
\ell_p(Z)=0
$$
without loss of generality.
Let $W\subset M$ be a
tubular neighborhood of $Z$ so small that $Z$ is the only critical
submanifold of $\ell_p$ in $W$ and let
\begin{equation}\label{pI}
\pi:W\rightarrow Z
\end{equation}
be the projection.

\begin{definition} Notation as above, we will call such a $W$ a {\em neck}\;
around $Z$.
\end{definition}

Let us parametrize $W$ by
$v_1,\cdots,v_s,u_1,\cdots,u_{n-1-s}$ with $u_1=\cdots=u_{n-1-s}=0$ parametrizing $Z$
such that
\begin{equation}\label{pII}
\pi:(v_1,\cdots,v_s,u_1,\cdots,u_{n-1-s})\mapsto (v_1,\cdots,v_s).
\end{equation}
By a linear change of $u_1,\cdots,u_{n-1-s}$, we may assume, around $0\in Z$,

\begin{eqnarray}\label{el}
\aligned
\ell_p&=\sum_{j=1}^{n-1-s}\alpha_ju_j^2+O(3),\\
g&=h(u)+k(v)+\sum_{jk}\beta_{jk}v_jv_k+\sum_{jk}\gamma_{jk}u_jv_k+O(3),
\endaligned
\end{eqnarray}
with
$$
h(u)=\sum_i a_iu_i+\sum_{j,k}b_{jk}u_ju_k,\quad k(v)=\sum_l c_l v_l
$$
for some small coefficients $a_i,\beta_{jk},\gamma_{jk},b_{jk},$ and $c_l$ all in the magnitude
of $|q-p|$ by the linear nature of $g$, where $\alpha_j$ are fixed nonzero constants.
We obtain
$$
F_j:={\partial(\ell_p+g)}/{\partial u_j}=a_j+2\alpha_ju_j+2\sum_l
b_{jl}u_l+\sum_{k}\gamma_{jk}v_k+O(2)
$$
for $1\leq j\leq n-1-s$, and 
$$
G_i:={\partial(\ell_p+g)}/{\partial v_i}=c_i+2\sum_k \beta_{ik}v_k+\sum_{j}\gamma_{ji}u_j+O(2)
$$
for $1\leq i\leq s$. We calculate 

\begin{equation}\label{implicit}
\partial(F_1,\cdots,F_{n-1-s},G_1,\cdots,G_s)/\partial(u_1,\cdots,u_{n-1-s},v_1,\cdots,v_s)
=\begin{pmatrix}
\Theta&\gamma\\\gamma^{tr}&\Gamma\end{pmatrix},
\end{equation}
where
$$
\Theta:=\begin{pmatrix}2\alpha_j\delta_{jl}+2b_{jl}\end{pmatrix},\quad \gamma:=\begin{pmatrix}\gamma_{jk}\end{pmatrix},\quad
\Gamma:=\begin{pmatrix}\beta_{ik}\end{pmatrix},
$$
at $u_1=\cdots=u_{n-1-s}=v_1=\cdots=v_s=0$. Note that $\Theta$ is nonsingular
when we let $|q-p|$ be much smaller than
$\min_j|\alpha_j|$. 

Due to the compactness of $Z$, we can cover $W$ by a finite number of coordinate charts as above, so that
there is an open disk ${\mathcal D}$ centered at $p$ whose radius is so small that $\Theta$ is nonsingular
in any of these coordinate charts for all $q\in{\mathcal D}$.
In other words, since the left hand side of~\eqref{implicit} is the Hessian of $\ell_q=\ell_p+g$, 
we may assume without loss of generality that
the Hessian of $\ell_q$ is of rank $\geq n-1-s$ in $W$
whenever $q\in{\mathcal D}$.

Recall Definition~\ref{cone}. Let us be given a vector $e$, which will in future applications be 
a unit tangent cone vector at $p$ of an appropriately chosen set. Assume that $Z$ 
is not on a level set
of $\ell_e$.
Since $Z$ is taut, the height function $\ell_e$ cuts $Z$ in
several critical submanifolds $Z_1,\cdots, Z_m$. Let us
consider $Z_1$, for instance. Assume the codimension of $Z_1$ in $Z$ is $t$.

Let $T_1$ be a neck of $Z_1$ in $Z$ and let
$$
N_1:=\pi^{-1}(T_1)\subset W
$$ 
with $\pi$ given in~\eqref{pI}.
Retaining the notation in~\eqref{pII}, let us parametrize $N_1$ by the variables
$$
v_1,\cdots,v_t,v_{t+1},\cdots,v_s, u_1,\cdots,u_{n-1-s}
$$ 
around 0, where
$v_{t+1}\cdots,v_s$ parametrize $Z_1$ and $v_1,\cdots,v_s$ parametrize $T_1$
around $Z_1$ in $Z$.
It is understood that 0 in the coordinate system corresponds to a point on
$Z_1$. 

Note that $\ell_e$ now assumes a simpler form
\begin{equation}\label{ell}
\ell_e=h(u)+\sum_{j=1}^t\beta_{j}v_j^2+\sum_{jk}\gamma_{jk}u_jv_k+O(3),
\end{equation}
where all $\beta_i$ are nonzero since $Z_1$
is a critical submanifold of $\ell_e|_Z$ in $Z$.

By continuity, there is a small open set ${\mathcal O}_e$ given in Definition~\ref{conenhd} (we ignore the radius $\delta$ in the definition) 
such that whenever $q\in{\mathcal O}_e$ the unit vector
$$
e_q:=(q-p)/|q-p|
$$
with 
\begin{equation}\label{elll}
\ell_{e_q}= h^q(u)+ k^q(v)+\sum_{jk}\beta_{jk}^{e_q}v_jv_k+\sum_{jk}\gamma_{jk}^{e_q}u_jv_k+O(3),
\end{equation}
where $k^q(v)$ is linear in $v$, satisfies that the upper $t$-by-$t$ block of the
matrix $\begin{pmatrix}\beta_{jk}^{e_q}\end{pmatrix}$ is nonsingular,
or equivalently, $\begin{pmatrix}\beta_{jk}^{e_q}\end{pmatrix}$ is of rank $\geq t$
whenever $q\in{\mathcal O}_e$ (by shrinking the coordinates if necessary). It follows that, when we substitute 
\begin{equation}\label{gl}
g_q:=|q-p|\ell_{e_q},\quad l=1,2,\cdots,
\end{equation}
into~\eqref{implicit}, with 
\begin{equation}\label{sub1}
\ell_{q}=\ell_p+g_q,
\end{equation}
we see the Hessian of
$\ell_{q}$ are all of rank $\geq n-1-s+t$, or all of kernel dimension
$\leq \dim(Z_1)$, whenever $q\in{\mathcal O}_e$. That is,
\begin{equation}\label{C}
\dim(C_q)\leq \dim(Z_1)
\end{equation} 
whenever a critical submanifold $C_q$ of $\ell_{q}$ lies in $N_1$. Similarly, the same holds for other $N_j$ as well.

On the other hand, at a point $x\in Z$ away from $Z_1,\cdots,Z_m$,
we can still parametrize $W$ around $x$ by $v_1,\cdots,v_s,u_1,\cdots,u_{n-1-s},$
where $v_1,\cdots,v_s$ parametrize $Z$ around $x$ identified with 0. Then slightly differently from the
earlier expression we have

\begin{eqnarray}\nonumber
\aligned
\ell_p&=\sum_{j=1}^{n-1-s}\alpha_ju_j^2+O(3),\\
\ell_e&=h(u)+\sum_{i=1}^s\gamma_iv_i+\sum_{i=1}^s\delta_{i}v_i^2+\sum_{jk}\gamma_{jk}u_jv_k+O(3),
\endaligned
\end{eqnarray}
where at least one of $\gamma_i$ is nonzero since $x$ is not a
critical point of
$\ell_e$ on $Z$.
As a consequence, following the argument below~\eqref{ell} we conclude
that the gradient of $\ell_q$ is nonzero in a neighborhood
of $x$ in $W$, so that 
no critical submanifold $C_{e_q}$ of $\ell_q$ passes through this
neighborhood for $q\in{\mathcal O}_e$.
In conclusion, we have the following.

\begin{proposition}\label{P} Notations and conditions as above, let $Z$ be a curvature
surface
with focal point $p$, and let $e$ be a vector, which will be a unit tangent cone vector at p of an appropriately chosen set later,
such that $Z$ is not on any level set of $\ell_e$. Let
$Z_1,\cdots,Z_m$ be the critical submanifolds of $\ell_e|_Z$ in $Z$ with small disjoint necks
$T_j$ of $Z_j$ in $Z$ and $N_j$ of $Z_j$ in $W, j=1,\cdots,m$.  
Then there is an open set ${\mathcal O}_e$ 
such that for every $q\in{\mathcal O}_e$ 
a focal submanifold $C_q$ of $\ell_q$ in $W$ is contained in a unique $N_s$ for some $s\leq m$ with
$$
\dim(C_q)\leq\dim(Z_s).
$$
\end{proposition}

\begin{proof} Since $Z_1,\cdots Z_m$ are disjoint in $Z$, as above we cover them by disjoint open
necks $T_j$ in $Z$ and $N_j\supset T_j$ in $W,1\leq j\leq m$;
then cover the
complement of $\cup_{j=1}^m T_j$ in $Z$ by finitely many
small open balls $B_j$ in $Z$ and open balls $O_j\supset B_j$ in $W, 1\leq j\leq a$, such that
no critical submanifold passes through $O_i,\forall i$. Therefore, $C_q\subset N_s$ for some unique $s\leq m$. 

The second statement is~\eqref{C}.
\end{proof}





\begin{corollary}\label{CCORO} Suppose all unit tangent cone vectors $e$ at $p$ of focal points $q$ near $p$ satisfy that $Z$ is not on any level set of $\ell_e$. 
Assume in any 
open ball $O_x(1/j)$ of radius $1/j$ centered at $x\in Z$, there is a point $y_j$ at which the principal multiplicities are not locally constant, or equivalently,
there is a curvature surface $C_j$ through $y_j$ whose dimension is not locally constant (so that it is not diffeomorphic to a sphere). Let $e$ be the unit tangent cone vector 
at $p$ of the focal points $q_j$ of $C_j$ and let $Z_1,\cdots,Z_m$ be the focal submanifolds of $\ell_e|_Z$ in $Z$. Then there is a unique $Z_s$ through $x$ that is 
not diffeomorphic to a 
sphere, or equivalently, whose dimension in $Z$ is not locally constant. 
\end{corollary}      

\begin{proof} Notations are as in Proposition~\ref{P}. That $x\in Z_s$ results when we let $j$ get larger and larger while shrinking the necks $N_1,\cdots,N_m$
more and more. 

We show $Z_s$ is not diffeomorphic to a sphere. For a sufficiently
large $j$, since $C_j$ lies in the neck $N_s$ of $Z_s$, tautness of $C_j$ and $Z_s$ imply that the topology of $C_j$ embeds in the topology of $Z_s$ (see the remark below). Therefore, 
$Z_s$ cannot be a sphere. Otherwise $C_j$ would be a sphere, which is not the case.
\end{proof}

\begin{remark}\label{importantremark} 
We let
$$
M_{p,\epsilon}:= \ell_p^{-1}((-\infty,\epsilon]), \quad 
M_{p,\epsilon}^\circ:= \ell_p^{-1}((-\infty,\epsilon)). 
$$
Fix noncritical values $a,b$ of $\ell_p$ with $a<0<b$ and 0 the only critical value between them. 
Notation as in the preceding corollary. Under the negative gradient flow of $\ell_p$,   
$W\supset Z$ is homotopic to the disk bundle $B$ with base $Z$ 
and fiber the unit disk ${\mathbb D}^{\mu}\subset{\mathbb R}^{\mu}$, where $\mu$ is the Morse-Bott index of $Z$. So, $N_s$ is homotopic to the disk bundle $B$ restricted to
$T_s$, which is homotopic to the disk bundle $B$ restricted to $Z_s$, denoted by $B_s$. The topology of $N_s$ attached to $M_{p,a}$ is therefore
$$
H_k(B_s,\partial B_s)=H_{k-\mu}(Z_s)
$$
by Thom isomorphism. On the other hand, since $\ell_{q_j}$ assumes the critical submanifolds $C_j$ and possibly other $Y_1,\cdots,Y_a$ in $N_s$ with indexes 
$\sigma_j, \tau_1,\cdots,\tau_a$ and critical values $\alpha_j,\beta_1,\cdots,\beta_a$,
respectively, 
we see there is an isomorphism 
\begin{equation}\label{isomorphism}
H_{k-\sigma_j}(C_j)\oplus_{b=1}^a H_{k-\tau_b}(Y_b)\simeq H_{k-\mu}(Z_s),
\end{equation}
because we can find noncritical values $a'$ and $b', a'<\alpha_j,\beta_1,\cdots,\beta_a<b',$ of $\ell_{q_j}$ such that
$$
M_{p,a}\subset M_{q_j,a'}^\circ\subset M_{q_j,\alpha_j},M_{q_j,\beta_1},\cdots,M_{q_j,\beta_a},M_{p,0}\subset M_{q_j,b'}^\circ\subset M_{p,b},  
$$
so that the topology of $N_s$ attached to $M_{q_j,a'}$, which deformation retracts to $M_{p,a}$, is the left hand side of~\eqref{isomorphism}.
Shifting indexes, 
$H_k(C_j)$ embeds in $H_{k+\sigma_i-\mu}(Z)$ for all $k$.

In conclusion, if $Z_s$ is a sphere, then $C_j$ must be either a point or a sphere of the same dimension as $Z_s$. In particular, if $C_j$ is a 
curvature surface,
then $C_j$ must be a sphere of the same dimension as $Z_s$, so that the dimension of $Z_s$ is locally constant. 
\end{remark}

\begin{corollary}\label{c} Let $(q_j)$ be a sequence of focal points converging to the unit tangent cone vector $e$ at $p$. If $Z$ is
not on any level set of $l_{q_j}$ for all $j$, then $Z$ is not on any level set of $l_e$.
\end{corollary}

\begin{proof} Suppose $Z$ is on a level set of $\ell_e$. Then $Z$ must be on a regular level set of $\ell_e$; otherwise, Corollary~\ref{cor} 
would imply that $p=e$, which is not the case ($p\perp e$ on $S^n$). Therefore, as in~\eqref{gl},  
$\ell_{q_j}=\ell_p+g_j$ would be regular over $Z$ for $j\geq L$ for some $L$ when $q_j\in{\mathcal O}_e$ given in Proposition~\ref{P}. Now locally,

\begin{eqnarray}\nonumber
\aligned
\ell_p&=\sum_{j=1}^{n-1-s}\alpha_ju_j^2+O(3),\\
\ell_{e_j}&=h(u)+k(v)+\sum_{i=1}^s\delta_{i}v_i^2+\sum_{ik}\gamma_{ik}u_iv_k+O(3),
\endaligned
\end{eqnarray}
where either $h(u)$ or $k(v)$ has a nontrivial linear term, since 
$\ell_{e_j}$ is regular on $Z$. It follows that the gradient of $\ell_{q_j},j\geq L,$ is nonzero in a tubular neighborhood of $Z$, which implies that $q_j$ would not converge to $p$, a contradiction.
\end{proof}

\section{The proof} 
We do induction on $n$, the dimension of the ambient sphere, with the induction statement that all compact taut submanifolds in $S^n$ are algebraic.
The statement is clearly true for $n=1$. Assume the statement is true for $n-1$. Let us first handle a taut hypersurface $M$ in $S^n$ to show that it is 
algebraic. Since any curvature surface of $M$ is contained in a curvature sphere of dimension $n-1$, we know by the induction hypothesis that all curvature surfaces, being taut 
by Ozawa theorem, are
algebraic. We then proceed to establish that $M$ is algebraic by establishing the local finiteness property on $M$.

To this end,
let $x$ be a point in ${\mathcal G}^c$ and let $Z$
through $x$ be a curvature surface
with focal point $p$. We stipulate that all the conditions on the neck
$$  
\pi:W\rightarrow Z
$$
we encountered in the preceding section prevail.

Let $J$ be the principal index range of the principal maps~\eqref{fmap} satisfying
\begin{equation}\nonumber
p=f_a(x),\quad \forall a\in J.
\end{equation}
Since the principal multiplicities are not locally constant at $x$,
given any neighborhood $V_p$ of $p$ and $U_x$ of $x$, there is a point
$y\in(\cup_{a\in J}f_a^{-1}(V_p))\cap U_x$
and a curvature surface $C_y$ through $y$ such that
$\dim(C_y)<\dim(Z)$.
Furthermore, by the continuity of the focal maps
there are open neighborhoods $N_p$ of $p$ and $O_x$ of $x$ so small that
$f_b(O_x)$ is disjoint from $N_p$ for all $b\notin J$.
Set
\begin{equation}\label{Np}
O^*:=(\cup_{a\in J}f_a^{-1}(N_p))\cap O_x.
\end{equation}
Then for any focal point $q\in N_p$, there is a curvature surface of $\ell_q$ through $O^*$; moreover,  each curvature surfaces $C_y$ through $y\in O^*$ has  principal index range
contained in $J$ so that in particular $\dim(C_y)\leq \dim(Z)$, and moreover,
there exist $C_y$ with $\dim(C_y)<\dim(Z)$ in any neighborhood of $x$ in $O^*$ because $x\in{\mathcal G}^c$.

We further stipulate, by choosing $O^*$ so small, that

\vspace{2mm}

\noindent $Z$ be the only critical submanifold of $\ell_p$ contained in the topological closure of $W$, and,

\vspace{2mm}

\noindent any curvature surface passing through $O^*$ with principal index range contained in $J$ be entirely contained in $W$.

\vspace{2mm}

Since there are only a finite number of critical submanifolds
with the focal point $p$ and $Z$ is the only critical submanifold of $\ell_p$ in $W$,
all focal points $q$ of
curvature surfaces $\neq Z$ in $W$ are different from $p$ in any small neighborhood of $p$.

We have two cases to consider.

\vspace{2mm}

\noindent {\bf Category 1.} One of the curvature spheres of a focal point $\ell_q,q\neq p,$
contains $Z$.

\vspace{2mm}

This means that $Z$ is contained in a level set of such
a height function $\ell_q$. Suppose $Z$ is contained
in a critical submanifold of $\ell_q$.
Then by Corollary~\ref{cor}, the height functions $\ell_p$ and
$\ell_q$ share the same center of
the curvature sphere through $Z$, so that it must be that $p=q$, which is not the case.
Therefore, all points of $Z$ are regular
points of $\ell_q$.

We can understand all these $q$ explicitly. Let $S^l$ be the smallest sphere
containing $Z$. It is more convenient to view what goes on in $R^n$ when we
place the pole of the stereographic projection on $Z$.
Then we are looking at an ${\mathbb R}^l$, which, by a conformal transformation
of the sphere, we may assume is the
standard
one contained in ${\mathbb R}^n$, in which $Z$ sits. Let
$E\simeq{\mathbb R}^{n-l}$
be the orthogonal complement of the ${\mathbb R}^l$. Any ${\mathbb R}^{n-l-1}$
in $E$ gives rise to an ${\mathbb R}^{n-1}$ containing $Z$, and vice versa.
Back on the sphere, this means that
we have an $(n-l-1)$-parameter family of $S^{n-1}$ containing $Z$. The focal
points $f$ of these $S^{n-1}$ form an $S^{n-l-1}$ on the equator. 

Assume $\dim(Z)<l$ first. 
Notations as above, let us consider the incidence space
$$
{\mathcal I}\subset S^{n-l-1}\times M
$$
given by 
\begin{eqnarray}\nonumber
\aligned 
{\mathcal I}:=\{&(t,z):t\;\text{is sufficiently close to}\; p\;\text{and}\;
z\;\text{belongs to a critical}\\ 
&\text{submanifold of}\;\ell_t\;\text{passing through}\; O^*\}.
\endaligned
\end{eqnarray}
Let ${\mathcal I}^\circ\subset {\mathcal I}$ be defined by
\begin{eqnarray}\nonumber
\aligned
{\mathcal I}^\circ:=\{&(t,z)\in{\mathcal I}: z\; \text{belongs to a critical submanifold with}\\
&\text{dimension}\; <\dim(Z)\}.
\endaligned
\end{eqnarray}
Let 
$$
\Pi_2:{\mathcal I}\rightarrow M
$$ 
be the projection from ${\mathcal I}$ onto its second summand.

We may assume $O^*$ is so small that it is contained in the coordinate
chart $V$ employed in~\eqref{implicit}; we adopt the notations there. Around each $(t,z)\in {\mathcal I}^\circ$, choose a small neighborhood
$V_{t,z}\subset{\mathcal D}\times V$ over which a certain $(n-s)$-by-$(n-s)$ minor of
the Hessian matrix, given on the left hand side of~\eqref{implicit}, is nonsingular; here, $s=\dim(Z)$. Choose a countable refinement $V_1,V_2\cdots$ of the open covering
$\{V_{t,z}\}$ of ${\mathcal I}^\circ$. Fix a $V_j$, over which we may assume without loss of generality that the upper left $(n-s)$-by-$(n-s)$ minor
of the Hessian matrix is nonsingular. Via the map
$$
h:=(q,z)\in V_j:\mapsto (F_1(q,z),\cdots,F_{n-s-1}(q,z),G_1(q,z))\in {\mathbb R}^{n-s},  
$$
the implicit function theorem ensures that $h^{-1}(0)$ consists of countably many (disjoint) connected manifolds $V_{jk},k=1,2,\cdots,$ of 
dimension $n+s-1$. Each $V_{jk}$ is parametrized by $(q,v_2,\cdots,v_s)$ with the chart map
$$
g_{jk}:(q,v_2,\cdots,v_s)\in{\mathbb R}^{n+s-1}\mapsto (q,u_1,\cdots,u_{n-s-1},v_1,\cdots,v_s)\in V_{jk},
$$
where $u_1,\cdots,u_{n-s-1},v_1$ are functions of $q,v_2,\cdots,v_s$. Since $v_2,\cdots,v_s$ are coordinate functions over
$V_{jk}$ with respect to the chart, we can define the map
$$
f_{jk}:(q,z)\in V_{jk}\mapsto (v_2(q,z),\cdots,v_s(q,z))\in {\mathbb R}^{s-1}.
$$ 
Consider the map
$$
F_{jk}:V_{jk}|_{{\mathcal I}^\circ}\rightarrow {\mathbb R}^{\dim(M)-l}\times {\mathbb R}^{s-1},\quad F_{jk}=(id,f_{jk}):(t,z)\mapsto (t,f_{jk}(t,z)).
$$
It is clear that ${\mathbb R}^{\dim(M)-l}\times {\mathbb R}^{s-1}$ is of Hausdorff dimension $=\dim(M)-2$ since
$l>s$. Therefore, $V_{jk}|_{{\mathcal I}^\circ}$
is also of Hausdorff dimension at most $\dim(M)-2$ via the inverse Lipschitz-continuous
map $g_{jk}$. It follows that each $V_j|_{{\mathcal I}^\circ}$ and thus ${\mathcal I}^\circ$, and its topological closure $\overline{{\mathcal I}^\circ}$,
are of Hausdorff dimension at most $\dim(M)-2$ as well. As a consequence, $\Pi_2(\overline{{\mathcal I}^\circ})$ is of Hausdorff
dimension at most $\dim(M)-2$, which therefore does not disconnect $M$~\cite[p.\ 269]{SY}. 

In other words, the set of
points $z\in O^*$ belonging to the critical submanifolds of $\ell_t$ passing through $O^*$ with dimension $<\dim(Z)$,
for $t$ sufficiently close to $p$, does not disconnect $M$ and so does not contribute to
the local finiteness property. Once such points are excised, ${\mathcal I}\setminus \overline{{\mathcal I}^\circ}$ is a manifold of dimension
$=\dim(M)-l+\dim(Z)$, which can be seen by solving     
$$
F_1=\cdots=F_{n-1-s}=G_1=\cdots=G_s=0
$$ 
by the implicit function theorem for $u_1,\cdots,u_{n-1-s}$ in terms of
$v_1,\cdots,v_s$ and $t$. 

Lastly, observe that $\Pi_2:{\mathcal I}\setminus\overline{{\mathcal I}^\circ}\rightarrow O^*$ is a finite map, because there are only at
most $\dim(M)$ many curvature surfaces through $z\in O^*$. It is also an open map since it is the restriction to
${\mathcal I}$ of the standard projection from $S^n\times S^n$ to $S^n$. By Federer's version of Sard's theorem~\cite[p.\ 316]{Fe},
which states that the critical value set of a smooth map $f:{\mathbb R}^l\rightarrow {\mathbb R}^s$, at which the rank of the derivative is $\leq \nu$, is of 
Hausdorff $\nu$-dimensional measure zero. Consequently, the critical
value set of $\Pi_2|_{{\mathcal I}\setminus\overline{{\mathcal I}^\circ}}$ is of Hausdorff $(\dim(M)-l+\dim(Z))$-dimensional measure zero, and so 
in particular, of Hausdorff $(\dim(M)-1)$-dimensional measure zero since $l>\dim(Z)$. So by~\cite[p.\ 269]{SY}
the critical value set of $\Pi_2({\mathcal I}\setminus \overline{{\mathcal I}^\circ})$ does not disconnect $M$, which can thus be excised as well. What remains 
is thus the regular set ${\mathcal R}$ of ${\mathcal I}\setminus\overline{{\mathcal I}^\circ}$, over which $\Pi_2$ is a finite covering map onto its image. It follows that
$\Pi_2({\mathcal R})$ is an immersed 
manifold of dimension $=\dim(M)-l+\dim(Z)\leq \dim(M)-1$, which thus disconnect $M$ in only finitely many components.

If $\dim(Z)=l$, then $Z=S^l$. Remark~\ref{importantremark} implies that all curvature surfaces passing through ${\mathcal O}^*$ (by shrinking it if necessary) 
are $S^l$, so that no principal index change occurs in ${\mathcal O}^*$. This is a contradiction.


In summary, barring a closed set of Hausdorff $(\dim(M)-1)$-dimensional measure zero in $O^*$ that does not disconnect $M$, the union of the
curvature surfaces in ${\mathcal G}^c$ in Category 1
is an immersed manifold of dimension $=\dim(M)-l+\dim(Z)\leq \dim(M)-1$ and
hence disconnects ${\mathcal G}$ in at most finitely many
connected components.  

\vspace{2mm}

\noindent {\bf Category 2.} No curvature spheres of $\ell_q$ of a focal point $q,q\neq p,$ contain $Z$.

\vspace{2mm}

\begin{definition} Let ${\mathcal F}_Z$ be the set of all points $q$ for which no curvature spheres of $\ell_q$ contain $Z$, and let ${\mathcal UC}_p$ be the
set of the unit tangent cone vectors of ${\mathcal F}_Z$ at $p$.
\end{definition}

By Corollary~\ref{cor}, ${\mathcal F}_Z$ is the image of the unit normal bundle $UN$ of $Z$ under the normal exponential map
$$
Exp:((x,n),t)\in UN\times (-\pi,\pi) \mapsto \cos(t) x+\sin(t) n\in {\mathcal F}_Z.
$$
Hence, ${\mathcal F}_Z$ is semialgebraic, and so ${\mathcal UC}_p$ is semialgebraic by construction. 
Note that ${\mathcal UC}_p\subset {\mathcal F}_Z$ by Corollary~\ref{c}.

By Corollary~\ref{CCORO}, for a sequence $C_j$ of curvature surfaces through
$ y_j\in C_j$ converging to $x$, where the dimension of each $C_j$ is not locally constant, we know a unit tangent cone vector $e$ at $p$ to which a subsequence of the
focal points $q_j$ of $C_j$ converge has the property that, the dimension of the critical submanifold of $\ell_e$ through $x$ in $Z$ is not locally constant.   
Accordingly, we make the following definition. 

\begin{definition}\label{uc0}
We let ${\mathcal UC}^\circ_p\subset {\mathcal UC}_p$ be the set
where the dimension of the critical submanifold of $\ell_e|_Z$ through $x$ in $Z$ is not locally constant.
\end{definition}

\begin{lemma}\label{uc1}
${\mathcal UC}^\circ_p$ is semialgebraic.
\end{lemma}

\begin{proof} ${\mathcal UC}^\circ_p$ consists of those $e\in{\mathcal UC}_p$ for which the gradient of $\ell_e|_Z=0$ at $x$ and the kernel (or rank) of the Hessian of 
$\ell_e|_Z$ at $x$ is not locally constant. Therefore,  
Example~\ref{rnk} and Proposition~\ref{<} give the desired conclusion.
\end{proof}

The nature of ${\mathcal F}_Z$ and ${\mathcal UC}^\circ_p$  motivates us to 
look into the following semialgebraic object.   



\begin{lemma}\label{cOR}
The set
$UN^o$ of unit normals $\xi$ of
$Z$ at which the shape operator $S_\xi$ has multiplicity change
is semialgebraic of dimension $\leq \dim(M)-1$.
\end{lemma}

\begin{proof} Let $\dim(Z)=s$ and let $(y,\zeta)\in Z\times S^{n-s-1}$
parametrize the unit normal bundle $UN$ of $Z$.
The characteristic polynomial of $S_\xi$ is of the form
$$
\lambda^s+a_{s-1}\lambda^{s-1}+\cdots+a_1\lambda+a_0,
$$
where $a_1,\cdots,a_{s-1}$ are polynomials in the zero jet of $\zeta$ and the second
jets of $y$; hence they are Nash functions. By the discussion following Theorem~\ref{slice} (the slicing theorem),
$Z\times S^{n-s-1}$ is decomposed into finitely many disjoint semialgebraic sets
$V_1,\cdots,V_\tau$, where each $V_i$ is equipped with semialgebraic functions
$\eta_{i,1}<\cdots<\eta_{i,{l_{i}}}$ that solve the characteristic polynomial, counting multiplicities; moreover, 
$UN^o$, where the principal multiplicities
are not locally constant, is semialgebraic of a lower dimension by Proposition~\ref{<} and the discussion preceding it. So 
\begin{equation}\label{M}
\dim(UN^o)\leq n-2=\dim(M)-1.
\end{equation}
\end{proof}


Now in view of
Corollary~\ref{cor}, for a unit normal $\xi$ to $Z$, we let
$q^1_\xi,q^2_\xi,\cdots,$ and $q^{\dim(Z)}_\xi$ be the
focal point of the curvature surface
through the base point of $\xi$ corresponding to the principal curvature function
$\lambda^1(\xi),\cdots$, and $\lambda^{\dim(Z)}(\xi)$ of $S_\xi$, respectively. The
remark following Corollary~\ref{cor} gives
the focal maps $g^1,\cdots,g^{\dim(Z)}$ that send $\xi$ to the respective focal points;
by the algebraic nature of $Z$, all these maps are semialgebraic.

Consider the semialgebraic set
${\mathcal X}\subset UN^o\times S^n\times S^n$ defined by
$$
{\mathcal X}:=\{(\xi,q,r):q=g^j(\xi)\,\text{for some}\, j; r\,
\text{belongs a critical set of}\, Z\, \text{of}\;\ell_q\}.
$$
Due to the nature of all these defining functions, ${\mathcal X}$ is
semialgebraic. (For instance, critical submanifolds are obtained by setting
the first derivative of the height function equal to zero on $Z$, which
is a semialgebraic process.)
Let
\begin{equation}\label{proj}
pr:UN\times S^n\times S^n\rightarrow S^n\times S^n
\end{equation}
be the standard
projection, which is a Nash submanifold, and let 
$$
{\mathcal J}:=pr({\mathcal X}).
$$
The set ${\mathcal J}$ is also semialgebraic.

\begin{lemma}\label{j}
$\dim({\mathcal J})\leq\dim(M)-1$.
\end{lemma}

\begin{proof} 
Consider
\begin{equation}\label{alpha}
\alpha:{\mathcal J}\rightarrow UN^o,\quad \alpha:(q,z)\mapsto \xi(q,z),
\end{equation}
where $\xi(q,z)$ is the unit tangent vector, based at $z$, along the geodesic of $S^n$ from $q$ to $z$. Note that 
$\alpha$ is the restriction to ${\mathcal J}$ of the Nash map
$$
\beta:S^n\times S^n\rightarrow S^n\times S^n,\quad \beta:(u,v)\mapsto \xi(u,v). 
$$
Note also that
$\alpha$ is a finite map, since in general each $\xi\in (UN^o)_z$ gives rise to at most $\dim(Z)$
many curvature surfaces through $z$. 
Therefore, by~\eqref{M} and~\eqref{alpha}, we have
\begin{equation}\label{dim}
\dim({\mathcal J})\leq\dim(UN^o)\leq \dim(M)-1.
\end{equation}
\end{proof}

Now we let
\begin{equation}\label{projection}
\Pi^1,\Pi^2:S^n\times S^n\rightarrow S^n
\end{equation}
be the standard projections onto the first and second summands,
respectively. Note that 
$\Pi^2$ is a finite map because through each point
in $M$ there are only at most $\dim(M)$ many critical submanifolds. Moreover,
$$
{\mathcal UC}^\circ_p\subset\Pi^1({\mathcal J})
$$
by construction.

\begin{corollary} The set
$$
{\mathcal I}:=(\Pi^1|_{\mathcal J})^{-1}({\mathcal UC}^\circ_p)
$$
is semialgebraic of dimension $\leq\dim(M)-2$.
\end{corollary}

\begin{proof} The dimension of ${\mathcal I}$ is 1 less than $\dim({\mathcal J})$ given in Lemma~\ref{j} because ${\mathcal UC}^\circ_p$ consists of unit tangent cone 
vectors. (This can be seen most clearly in ${\mathbb R}^n$ in place of $S^n$.)
\end{proof}

Recall the open set ${\mathcal O}_e$ defined before~\eqref{elll},
which is semialgebraic. We now stipulate that ${\mathcal O}_e\subset N_p$
defined in~\eqref{Np} and set
$$
{\mathcal O}:=\cup_{e\in{\mathcal UC}^\circ_p}{\mathcal O}_e.
$$




\begin{corollary}
${\mathcal O}\cap{\mathcal F}_Z$
is $\sigma$-semialgebraic in the sense that it is a 
countable union of increasing compact semialgebraic sets 
$X_1\subset X_2\subset X_3\subset\cdots$, because there is a compact exhaustion of ${\mathcal O}$.
\end{corollary}

In view of Proposition~\ref{P}, we let
$$
{\mathcal K}\subset({\mathcal O}\cap {\mathcal F}_Z)\times M\subset S^n\times S^n
$$
be the incidence space
\begin{eqnarray}\nonumber
\aligned
{\mathcal K}:=\{&(q,z):\text{for}\;q\in{\mathcal O}_e,z\in\;\text{a critical submanifold of}\;\ell_q\subset\;\text{a neck}\\
&N_1\supset\;\text{the critical submanifold}\; Z_1\;\text{of}\;\ell_e|_Z,x\in Z_1,\;\text{as given in}\\
&\text{Proposition}~\ref{P}\}.
\endaligned
\end{eqnarray}

\begin{proposition}
Away from a closed subset\, ${\mathcal N}$ of Hausdorff $(\dim(M)-1)$-measure zero, $\Pi^2({\mathcal K})$ is a manifold of dimension at most $\dim(M)-1$.
\end{proposition}

\begin{proof} 



Let
$X_1\subset X_2,\subset X_3,\cdots$ be a countable collection of increasing compact
semialgebraic sets whose union is
${\mathcal O}\cap {\mathcal F}_Z$.
By Proposition~\ref{tri}, fix a semialgebraic open cell
decomposition ${\mathcal T}_j$ of $X_j, j=1,2,\cdots,$ in such a way that ${\mathcal T}_j$
is a sub-decomposition of ${\mathcal T}_{j+1}$ for all $j$ (by decomposing $X_{j+1}\setminus X_j$);
let $F_k, k=1,2,\cdots,s,$ be the open cells in the decomposition.

We now collect the unit tangent cone vectors of $F_k$ at $p$ and call the set $UC_k$ (it is empty if $p$ is not in the closure of $F_k$ ), which is 
semialgebraic with
\begin{equation}\label{uck}
\dim(UC_k)+1=\dim(F_k)
\end{equation} 
(because $UC_k$ consists of unit tangent cone vectors) if $UC_k$ is not empty.  

\begin{sublemma} 
$$
{\mathcal UC}^\circ_p=\cup_k UC_k.
$$
\end{sublemma}

\begin{proof} ${\mathcal UC}^\circ_p$ is a closed set since those $e$
for which the dimension of the critical submanifold of $\ell_e$ is
locally constant constitute an open set. Thus for a sequence $q_j$ of $F_k$ converging to the unit tangent cone vector $e$ at $p$, let $q_j\in O_{e_j}$
for some $e_j\in{\mathcal UC}^\circ_p$. Then a converging subsequence of $e_j$ must converge to $e\in{\mathcal UC}^\circ_p$. So, 
$\cup_k UC_k\subset{\mathcal UC}^\circ_p$.

Conversely, each ${\mathcal O}_e$ contains a sequence $q_j$ of points in ${\mathcal F}_Z$ converging to $e$ at $p$. Now
choose a small compact semialgebraic disk ${\mathcal B}$ around the focal point $p$ of $Z$.
${\mathcal B}$ intersects only finitely many compact $X_1\subset X_2\subset\cdots$, and so finitely many $F_1,\cdots,F_b$. Thus, there is a subsequence of 
$q_j$ falling in one of 
these $F_1,\cdots,F_b$, say, $F_1$; it follows that $e$ is a unit tangent cone vector of $F_1$ at $p$. That is, $\cup_k U_k\supset{\mathcal UC}^\circ_p$. 
\end{proof}



Define the semialgebraic set
$$
Q_k:=(\Pi^1|_{\mathcal J})^{-1}(UC_k)\subset{\mathcal I};
$$
clearly, we have 
$$
\dim(Q_k)\leq\dim({\mathcal I})\leq\dim(M)-2.
$$
Lastly, let
$$
P_k:=(\Pi^1|_{\mathcal K})^{-1}(F_k).
$$


\begin{sublemma} 
Assume $UC_k$ is not empty. $P_k$ is then of Hausdorff\; $({\dim(M)-1})$-measure zero provided\; $\dim(Q_k)\leq \dim(M)-3$.
\end{sublemma}

\begin{proof} Since each $F_k$ is an open cube with $p$ on its boundary, we can set up a Nash diffeomorphism $\iota_k$ between a cone extended out of $UC_k$, denoted
by ${\mathbb R}^+UC_k$, and $F_k$,
$$
\iota_k:{\mathcal R}^+UC_k\rightarrow F_k,
$$
such that $\iota_k(te),e\in UC_k,0<t<\alpha_e$ for some $\alpha_e,$ all lie in ${\mathcal O}_e$. (Again, this is most clearly seen when viewed in ${\mathbb R}^n$
in place of $S^n$, where $\iota_k(te)$ is the axis of the cone ${\mathcal O}_e$.) Note that, in particular,
the fiber over $\iota_k(te),0<t<\alpha_e,$ is of dimension $\leq$
the dimension of the fiber over $e$ by Proposition~\ref{P}. Thus, by
an analysis analogous to the one in Category 1, we obtain, by~\eqref{uck},
$$
\text{Hausdorff}\;\dim(P_k)\leq\dim(Q_k)+1\leq\dim(M)-2.
$$

The countable union of all $P_k$, where $\dim(Q_k)\leq \dim(M)-3$, is thus also of Hausdorff $(\dim(M)-1)$-measure
zero.
\end{proof}


We let 
$$
{\mathcal N}_1:=\cup_k \Pi^2(P_k),\quad \dim(Q_k)\leq \dim(M)-3.
$$ 
${\mathcal N}_1$ is of Hausdorff $(\dim(M)-1)$-measure zero
so that ${\mathcal N}_1$ does not
disconnect $M$~\cite[p.\ 269]{SY}, and so does not contribute to
the local finiteness property.

Thus what is left now are $P_k$ with 
$$\dim(Q_k)=\dim(M)-2
$$ 
(assuming $UC_k$ is not empty). Note that in the 
fibration of $P_k$ 
over $F_k$ 
we may ignore the fibers over those $\iota_k(te)$ 
of fiber dimension less than the fiber dimension $d_e$ over $e$ in the fibration of $Q_k$ over $UC_k$.  In fact,
let $C$ be the fiber over $\iota_k(te)$ of dimension
$d<d_e$.
Then the Hessian matrix on the left hand side of~\eqref{implicit} is of kernel dimension
$d$ on $C$, so that around $\iota_k(te)$ in $F_k$ there is a neighborhood $N$  
over which the Hessian matrix is of kernel dimension $\leq d$ around $C$,
such that the neighboring critical submanifolds around $C$ in the fibration of $P_k$ are
contained in
a smooth family $P_k^*$ parametrized by $q\in N,$ each of whose fibers is of
dimension $=d<d_e$.
Therefore, $P_k^*$ is a manifold of dimension $\leq \dim(M)-2$ and
so $P^*_k$ and its topological closure $\overline{P^*_k}$ are of Hausdorff $(\dim(M)-1)$-measure zero.
We let 
$$
{\mathcal N}_2:=\;\text{the union of such}\; \Pi^2(\overline{P_k^*}).
$$

We denote by $P_k^\circ$ the remaining part of $P_k$ away from the preceding two classes of sets we excised. 
$P_k^\circ$ is then a smooth manifold of dimension $\dim(M)-1$.
By Federer's version of
Sard's theorem~\cite[p.\ 316]{Fe},
the critical value set ${\mathcal C}_k\subset M$ of 
$$
\Pi^2:P_k^\circ\rightarrow M
$$
is of rank $\leq\dim(M)-1$ and so is of Hausdorff
$(\dim(M)-1)$-measure zero.
This implies that  
$C_k$ and hence the union of all $C_k$
do not disconnect
$M$. We let 
$$
{\mathcal N}_3=\cup_k {\mathcal C}_k.
$$

Now let 
${\mathcal N}$ be the topological closure of the union of ${\mathcal N}_1$, ${\mathcal N}_2$, ${\mathcal N}_3$; 
${\mathcal N}$ is of Hausdorff $(\dim(M)-1)$-measure zero.
In particular, ${\mathcal N}$ is closed 
and does not disconnect $M$ and so is nowhere dense in $M$. Hence,
\begin{equation}\label{pijk}
\Pi^2:P_k\setminus (\Pi^2)^{-1}({\mathcal N})\rightarrow S^n\setminus {\mathcal N}
\end{equation}
is a locally diffeomorphic finite map from a manifold of dimension $\dim(M)-1$ into $M$, whose image
thus consists of at most $\beta(M)$ immersed manifolds of dimension $\dim(M)-1$ in $M$.

Lastly, we need to take care of those $F_k$ for which $UC_k$ are empty, i.e., whose closures do not contain $p$. 
Choose a small compact semialgebraic disk ${\mathcal B}$ around the focal point $p$.
${\mathcal B}$ intersects only finitely many $X_1\subset X_2\subset\cdots$, and hence finitely many
$F_1,\cdots,F_b$, from which we remove those whose closures do not contain $p$ and call the remaining ones $F_{i_1},\cdots,F_{i_a}$.
We then go through the same arguments as above for each of $F_{i_1},\cdots,F_{i_a}$ to conclude that $\Pi^2(P_{i_1}),\cdots,\Pi^2(P_{i_a})$ 
are immersed submanifolds of codimension 1 in the neighborhood ${\mathcal B}$ around $p$, away from a closed set that does not disconnect $M$.
\end{proof} 

In summary, barring a closed set of Hausdorff $(\dim(M)-1)$-dimensional measure zero in $W$ that does not disconnect $M$, the union of the
curvature surfaces in ${\mathcal G}^c$ in Category 2
is locally an immersed manifold of dimension $\leq \dim(M)-1$, and hence
locally disconnects ${\mathcal G}$ in at most finitely many
connected components.

\vspace{2mm}

We have established the local finiteness property in the hypersurface case.

\vspace{2mm}

We now handle the case when $M$ is a taut submanifold.
It is more convenient to work in ${\mathbb R}^n$. Let
$M_\epsilon$ be a tube over $M$ of 
sufficiently small
radius that $M_\epsilon$ is an embedded hypersurface in ${\mathbb R}^n$.  Then
$M_\epsilon$ is a taut hypersurface~\cite{Pi},
so that by the above $M_\epsilon$ is algebraic.  
Consider the focal map $F_\epsilon : M_\epsilon \rightarrow M \subset {\mathbb R}^n$ given
by
\begin{equation}\label{dis}
F_\epsilon(x) = x - \epsilon \xi,
\end{equation}
where $\xi$ is the outward field of unit normals to the tube $M_\epsilon$.  Any point of $M_\epsilon$ has an open
neighborhood $U$ parametrized by an analytic algebraic map.  The first derivatives
of this parametrization are also analytic algebraic~\cite[p.\ 54]{BCR}, and thus the Gram-Schmidt
process, applied to these first derivatives and some constant non-tangential vector, produces the
vector field $\xi$ and shows
that $\xi$ is analytic algebraic on $U$.  Hence $F_\epsilon$ is analytic algebraic on $U$ and so the image 
$F_\epsilon (U) \subset M$ is a semialgebraic subset of ${\mathbb R}^n$.  Covering $M_\epsilon$ by finitely many sets
of the kind of $U$, we see that $M$, being the union of their images under
$F_\epsilon$, is a semialgebraic subset
of ${\mathbb R}^n$. Then the Zariski closure $\overline{M}^{\text{zar}}$ of $M$ is
an irreducible algebraic variety of the same dimension as $M$ and contains
$M$ (see~\cite{CCJ} for more details).  

The induction is thus completed.

\thebibliography{100}
\bibitem{Ba} B.\ Bhata, {Matrix Analysis}, Graduate Texts in Mathematics, Vol. 196,
Springer, New York, 1996.
\bibitem{BCR} J.\ Bochnak, M.\ Coste and M.-F. Roy, {Real Algebraic Geometry},
vol. 36, Ergebnisse der Mathematik und ihrer Grenzgebiete, Springer, Berlin, 1998.
\bibitem{Bt} R.\ Bott, {\em Nondegenerate critical manifolds}, {Ann.\ Math.}\,
{\bf 60}(1954), 248-261.
\bibitem{CCJ} T.\ Cecil, Q.-S.\ Chi and G.\ Jensen, {\em On Kuiper's question
whether taut submanifolds are algebraic}, {Pacific J.\ Math.}\,
{\bf 234}(2008), 229-247.
\bibitem{Ce} T.\ Cecil, {Taut and Dupin submanifolds}, pp. 135-180, in Tight and Taut Submanifolds,
edited by T.\ Cecil and S.-S. Chern, Mathematical Sciences Research Institute
Publications, vol. 32, 1997.
\bibitem{Fe} H. Federer, {Geometric Measure Theory}, Die Grundlehren der
mathematischen
Wisenschaften {\bf 153}, Springer, New York, 1969.
\bibitem{Ku1} N.\ H.\ Kuiper, {\em Tight embeddings and maps. Submanifolds of
geometrical class three in $E^n$}, {The Chern Symposium 1979 (Proc. Internat. Sympos.,
Berkeley, Calif., 1979)}, 97-145, Springer-Verlag,
Berlin, Heidelberg, New York, 1980.
\bibitem{Ku} N.\ H.\ Kuiper, {\em Geometry in total absolute curvature theory},
pp. 377-392, in Perspectives in Mathematics (Oberwolfach 1984). 
\bibitem{MB} R.\ Bott, {\em Nondegenerate critical manifolds}, {Ann.\ Math.}\, {\bf 60}(1954), 248-261.
\bibitem{Oz} T.\ Ozawa, {\em On critical sets of distance functions to a taut
submanifold}, {Math.\ Ann.}\, {\bf 276}(1986), 91-96.
\bibitem{Ma} R.\ Miyaoka, {\em Taut embeddings and Dupin hypersurfaces}, 
Differential Geometry in Submanifolds, Lect. Notes in Math. {\bf 1090} (1984) 15-23.
\bibitem{Pi} U.\ Pinkall, {\em Curvature properties of taut submanifolds},
{Geom. Dedicata}\, {\bf 20}(1986), 79-83.
\bibitem{Re} H.\ Reckziegel, {\em On the eigenvalues of the shape operator of an
isometric immersion into a space of constant curvature}, {Math.\ Ann.}\, {\bf 243}(1979), 71-82.
\bibitem{SY} R.\ Schoen and S.\-T.\ Yau, {Lectures on differential geometry},
Conference Proceedings and Lecture Notes in Geometry and Topology, I,
International Press, Cambridge, MA, 1994.
 \bibitem{TT} C.-L.\ Terng and G.\ Thorbergsson, {\em Submanifold geometry in symmetric spaces}, {J.\ Differ.\ Geom.}\, {\bf 42}(1995), 665-718.                                                                            
\bibitem{TT1} C.-L.\ Terng and G.\ Thorbergsson, {\em Taut immersions into complete Riemannian manifolds}, {Tight and Taut Immersions}, MSRI Publications {\bf 32}(1997), 181-228.

\end{document}